\newtheorem{thm}{Theorem}
\newtheorem{prop}{Proposition}
\newtheorem{lem}{Lemma}
\theoremstyle{remark}
\newtheorem{rem}{Remark}
\newtheorem*{acknowledgments}{Acknowledgments}
\theoremstyle{definition}
\newtheoremstyle{notes}% name
{3pt}% Space above
{3pt}% Space below
{}% Body font
{}% Indent amount
{\bfseries}% Theorem head font
{:}% Punctuation after theorem head
{.4em}% Space after theorem head
{}% Theorem head spec (can be left empty, meaning ‘normal’)
\theoremstyle{notes}
\newtheorem*{keywords}{Keywords}
\newtheorem*{subjclass}{AMS MSC 2010}
\newtheorem{example}{Example}
\DeclareMathOperator*{\Var}{Var}
\newcommand{\D}[1]{\mathop{\mathrm{d}#1}}
\title{CLT with explicit variance for products of random singular matrices related to Hill's equation\footnote{Work supported in part by NSF Grant DMS-1659643.}}
\author{Phanuel Mariano\footnote{Supported at Union College in part by an AMS-Simons Travel Grant 2019-2022.}\\
\href{mailto:marianop@union.edu}{\texttt{{\small marianop@union.edu}}
}
\and
Hugo Panzo\footnote{Supported at the Technion by a Zuckerman Fellowship.}\\ 
\href{mailto:panzo@campus.technion.ac.il}{\texttt{{\small panzo@campus.technion.ac.il}}}}
\date{\today}
\begin{document}

\maketitle

\begin{abstract}
We prove a central limit theorem (CLT) for the product of a class of random singular matrices related to a random Hill's equation studied by Adams--Bloch--Lagarias. The CLT features an explicit formula for the variance in terms of the distribution of the matrix entries and this allows for exact calculation in some examples. Our proof relies on a novel connection to the theory of $m$-dependent sequences which also leads to an interesting and precise nondegeneracy condition.
\end{abstract}

\begin{keywords}
Lyapunov exponent; products of random matrices; central limit theorem; Hill's equation; $m$-dependent sequences. 
\end{keywords}

\begin{subjclass}
Primary 37H15, 60B20, 60F05; Secondary 34F05, 60G10.
\end{subjclass}

\section{Introduction}

\subsection{Random Hill's equation}\label{sec:random_Hills}

In this paper we consider the product of a sequence $\{Y_j\}_{j\geq 1}$ of random singular matrices of the form
\begin{equation}\label{eq:1}
Y_j=\left[\begin{array}{cc}
1 & x_j\\
\frac{1}{x_j} & 1
\end{array}\right]
\end{equation}
where $\{x_j\}_{j\geq 1}$ is an $\mathrm{i.i.d.}$ sequence of $\mathbb{R}\setminus\{0\}$-valued random variables. These random matrices are related to a random Hill's equation studied by Adams, Bloch, and Lagarias in \cite{Adams-Bloch2008,Adams-Bloch2009,Adams-Bloch2010,Hills_2013,Hills}. This equation is a generalization of the classical deterministic Hill's equation \cite{Hill-1986} which originally modeled lunar orbits and has more recently been applied to the study of many other physical systems. See the monograph \cite{Hills_book} in addition to the more recent articles cited above and references therein for further results on the classical Hill's equation and its many applications. 

The random Hill's equation studied by Adams--Bloch--Lagarias that is related to our work is the second order $\pi$-periodic ODE 
\begin{equation}\label{eq:Hills_ODE}
\frac{\D{}^2 y_j}{\D{t}^2}+\left(\lambda_j+q_j\,\hat{Q}(t)\right)y_j=0,~(j-1)\,\pi\leq t\leq j\,\pi
\end{equation}
where the random real-valued oscillation parameter $\lambda_j$ and forcing parameter $q_j$ form an form an $\mathrm{i.i.d.}$ sequence $\{(\lambda_j,q_j)\}_{j\geq 1}$ and $\hat{Q}$ is a periodic function with minimal period $\pi$ that is symmetric about $\frac{\pi}{2}$ with normalization $\int_0^\pi\hat{Q}(t)\D{t}=1$. 

In the remainder of this section, we draw from \cite{Adams-Bloch2010,Hills} to illustrate how equation \eqref{eq:Hills_ODE} is connected to the matrices \eqref{eq:1}. Towards this end, first consider any two successive cycles and their parameter realizations $(\lambda_j,q_j)$ and $(\lambda_{j+1},q_{j+1})$. The general solutions $y_j$ and $y_{j+1}$ of \eqref{eq:Hills_ODE} can be written as linear combinations of the normalized principal solutions $y_{1,j},\,y_{2,j}$ and $y_{1,j+1},\,y_{2,j+1}$. That is,
\begin{equation}\label{eq:principal}
\begin{split}
y_j(t) & =\alpha_j\,y_{1,j}(t)+\beta_j\,y_{2,j}(t),~(j-1)\,\pi\leq t\leq j\,\pi,\\
y_{j+1}(t) & =\alpha_{j+1}\,y_{1,j+1}(t)+\beta_{j+1}\,y_{2,j+1}(t),~j\,\pi\leq t\leq (j+1)\,\pi
\end{split}
\end{equation}
with the normalization
\begin{equation}\label{eq:boundary}
\begin{split}
y_{1,j}\big((j-1)\,\pi\big)&=y_{2,j}'\big((j-1)\,\pi\big)=y_{1,j+1}(j\,\pi)=y_{2,j+1}'(j\,\pi)=1,\\
y_{1,j}'\big((j-1)\,\pi\big)&=y_{2,j}\big((j-1)\,\pi\big)=y_{1,j+1}'(j\,\pi)=y_{2,j+1}(j\,\pi)=0.
\end{split}
\end{equation}

Since we want the concatenation of $y_j$ and $y_{j+1}$ to be continuously differentiable, it follows that $y_j(\pi\, j)=y_{j+1}(\pi\, j)$ and $y_j'(\pi\, j)=y_{j+1}'(\pi\, j)$. Together with \eqref{eq:principal} and \eqref{eq:boundary}, this implies that
\begin{equation}\label{eq:transform}
\begin{split}
\alpha_{j+1}&=\alpha_j\, y_{1,j}(j\,\pi)+\beta_j\, y_{2,j}(j\,\pi),\\
\beta_{j+1}&=\alpha_j \,y_{1,j}'(j\,\pi)+\beta_j\, y_{2,j}'(j\,\pi).
\end{split}
\end{equation}
The transformation \eqref{eq:transform} from the coefficients $(\alpha_j,\beta_j)$ of one cycle to the coefficients $(\alpha_{j+1},\beta_{j+1})$ of the next cycle can be represented as a matrix. Since the Wronskian of \eqref{eq:Hills_ODE} is $1$, the determinant of this transformation matrix must also be $1$. Moreover, $y_{1,j}(j\,\pi)=y_{2,j}'(j\,\pi)$ by the symmetry condition imposed on $\hat{Q}$. These facts allow us to write the transformation \eqref{eq:transform} in the form 
\[
\left[\begin{array}{c}
\alpha_{j+1}\\
\beta_{j+1}
\end{array}\right]=\underbrace{\left[\begin{array}{cc}
h_j & \left(h_j^2-1\right)/g_j\\
g_j & h_j
\end{array}\right]}_{\displaystyle M_j}\left[\begin{array}{c}
\alpha_j\\
\beta_j
\end{array}\right]
\]
where $h_j=y_{1,j}(j\,\pi)$ and $g_j=y_{1,j}'(j\,\pi)$. 

Next we note that since the parameter sequence is $\mathrm{i.i.d.}$, the pairs of normalized principal solutions will be also be $\mathrm{i.i.d.}$ across cycles. In light of \eqref{eq:principal}, this implies that any asymptotic growth or decay of the concatenated solutions $\{y_j\}_{j\geq 1}$ on $[0,\infty)$ is determined by the asymptotic behavior of the coefficients $(\alpha_j,\beta_j)$ as $j\to\infty$. In many cases it is the exponential rate of growth of the solutions that is of interest and this rate can often be deduced from the almost sure limit of $\frac{1}{n}\log\|M_n\cdots M_2\cdot M_1\|$ as $n\to\infty$ for any matrix norm $\|\cdot\|$.

As noted above, the pairs of normalized principal solutions are $\mathrm{i.i.d.}$ across cycles so consequently $\{h_j/g_j\}_{j\geq 1}$ is an $\mathrm{i.i.d.}$ sequence. Now we can substitute $x_j=h_j/g_j$ into the above formula for the transformation matrix $M_j$ to get an expression which involves matrices of the form \eqref{eq:1}, that is
\begin{equation}\label{eq:transform_mat}
M_j
=h_j\left[\begin{array}{cc}
1 & x_j\\
\frac{1}{x_j} & 1
\end{array}\right]+\left[\begin{array}{cc}
0 & -\frac{1}{g_j}\\
0 & 0
\end{array}\right].
\end{equation}
Letting $\|\cdot\|$ denote the Hilbert-Schmidt norm, we can compute the relative magnitudes of the matrix components on the right-hand side of \eqref{eq:transform_mat}, namely
\begin{equation}\label{eq:magnitudes}
\left\|\left[\begin{array}{cc}
1 & x_j\\
\frac{1}{x_j} & 1
\end{array}\right]\right\|=\left|\frac{g_j}{h_j}\right|+\left|\frac{h_j}{g_j}\right|~\text{ and }~\left\|\left[\begin{array}{cc}
0 & -\frac{1}{g_j}\\
0 & 0
\end{array}\right]\right\|=\frac{1}{|g_j|}.
\end{equation}

One scenario that is of particular interest is the so-called \emph{unstable regime} where $|h_j|\gg 1$. For instance, as remarked upon in Section 3 of \cite{Adams-Bloch2010}, this arises when the random forcing parameters satisfy $q_j\gg 1$. In this case it follows from \eqref{eq:magnitudes} that
\[
\left\|\left[\begin{array}{cc}
1 & x_j\\
\frac{1}{x_j} & 1
\end{array}\right]\right\|\gg\left\|\left[\begin{array}{cc}
0 & -\frac{1}{g_j}\\
0 & 0
\end{array}\right]\right\|
\]
and this suggests that we approximate \eqref{eq:transform_mat} with
\begin{equation}\label{eq:approx}
M_j\approx h_j\left[\begin{array}{cc}
1 & x_j\\
\frac{1}{x_j} & 1
\end{array}\right].
\end{equation}
As $\{h_j\}_{j\geq 1}$ is just an $\mathrm{i.i.d.}$ sequence of scalars, with this approximation the main task in computing the almost sure limit of $\frac{1}{n}\log\|M_n\cdots M_2\cdot M_1\|$ as $n\to\infty$ is reduced to computing the analogous limit for matrices of the form \eqref{eq:1}.

\subsection{Lyapunov exponent for products of random matrices}\label{sec:Lyapunov}

Let $\{Y_j\}_{j\geq 1}$ be an $\mathrm{i.i.d.}$ sequence of random $d\times d$ matrices, not necessarily of the form \eqref{eq:1}, and let $S_n=Y_n\cdots Y_2\cdot Y_1$ denote their partial products. We can define the growth rate or \emph{Lyapunov exponent} of the product by
\begin{equation}\label{eq:lyapunov_def}
\lambda :=\lim_{n\to\infty}\frac{1}{n}\mathbb{E}\big[\log\Vert S_{n}\Vert\big].
\end{equation}
That this limit exists in $[-\infty,\infty]$ follows from \cite[Theorem 1]{FurKest} and the fact that $\{Y_j\}_{j\geq 1}$ is an $\mathrm{i.i.d.}$ sequence. Note also that the Lyapunov exponent is invariant under the choice of matrix norm $\left\Vert \cdot\right\Vert $. 

Denote the positive part of $\log x$ by $\log^+ x$, that is, $\log^+ x=\max\{\log x,0\}$. Then assuming 
\[
\mathbb{E}\left[\log^+ \left\Vert Y_1\right\Vert\right]<\infty,
\]
Furstenberg and Kesten \cite[Theorem 2]{FurKest} showed that \eqref{eq:lyapunov_def} can be strengthened to an almost sure statement. More precisely, with probability $1$ we have
\[
\lambda=\lim_{n\to\infty}\frac{1}{n}\log\left\| S_{n}\right\|<\infty.
\]
This is often seen as an analogue of the classical strong law of large numbers. As mentioned in Section \ref{sec:random_Hills}, when the matrices are of the form \eqref{eq:1}, then this almost sure limit is connected to the growth rate of the solutions of the random Hills equation \eqref{eq:Hills_ODE} in the unstable regime where the approximation \eqref{eq:approx} is valid. 
 
Explicit formulas are a major focus of this paper and numerous authors have obtained such expressions for the Lyapunov exponent of specific $d\times d$ matrix models; see the papers \cite{Newman1,Mannion,Lima-Rahibe1994,Marklof-etall2008,Forrester2013,Kargin,Forrester-Zhang-2020} and monographs \cite{Bougerol,Crisanti,Finch} for some examples. However, in most cases the Lyapunov exponent cannot be computed explicitly and only estimates or an algorithm for its approximation can be given; see \cite{Tsitsiklis-Blondel1997,Viswanath2000,Pollicott2010,Protasov-Jungers-2013,Sturman-Thiffeault-2019,Rajeshwari-etall-2020,Lemm-Sutter-2020}.  

Most of the literature on products of random matrices deals with products in $\mbox{GL}(d,\mathbb{R})$ and requires assumptions on the subgroup generated by the support of the probability measure from which the random matrices are drawn. However, in the setting of \eqref{eq:1}, we lose these results since the matrices are all singular. For this reason, the authors of \cite{Hills} had to employ an ad hoc argument when proving the following explicit formula for the Lyapunov exponent of the product of random matrices drawn from this class. Generalizing this result in Theorem \ref{thm:lyapunov_new} is one of the main contributions of our paper.

\begin{prop}\label{prop:ABL_Lyapunov}
\emph{Adams--Bloch--Lagarias \cite[Theorem 17.2]{Hills}}\\
Consider the random singular matrices of the form 
\[
Y_j=\left[\begin{array}{cc}
1 & \epsilon_j\, r_j\\
\frac{1}{\epsilon_j\, r_j} & 1
\end{array}\right],~j\geq 1
\]
where $\{\epsilon_j\}_{j\geq 1}$ is an i.i.d. sequence of random signs with 
\[
\mathbb{P}(\epsilon_1=1)=p=1-\mathbb{P}(\epsilon_1=-1)
\]
and $\{r_j\}_{j\geq 1}$ is an i.i.d. sequence of positive random variables independent of the random signs such that
\[
\mathbb{E}\left[\log\left(1+r_1\right)\right]<\infty~\text{ and }~\mathbb{E}\left[\log\left(1+\frac{1}{r_1}\right)\right]<\infty.
\]
Then almost surely
\[
\lambda=\lim_{n\to\infty}\frac{1}{n}\log\left\Vert S_n\right\Vert<\infty
\]
and the Lyapunov exponent has the explicit representation
\begin{equation}\label{eq:lyapunov_old}
\lambda=\left(p^2+(1-p)^2\right)\mathbb{E}\left[\log\left(1+\frac{r_2}{r_1}\right)\right]+2p\left(1-p\right)\mathbb{E}\Bigg[\log\left|1-\frac{r_2}{r_1}\right|\Bigg].
\end{equation}
\end{prop}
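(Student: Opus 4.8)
The plan is to exploit the fact that every $Y_j$ has rank one and factors as an outer product. Writing $x_j=\epsilon_j r_j$ as in \eqref{eq:1}, one checks directly that
\[
Y_j=\begin{bmatrix}1\\ 1/x_j\end{bmatrix}\begin{bmatrix}1 & x_j\end{bmatrix}=u_j v_j^{\mathsf T},\qquad u_j=\begin{bmatrix}1\\ 1/x_j\end{bmatrix},\ v_j=\begin{bmatrix}1\\ x_j\end{bmatrix}.
\]
Substituting this into $S_n=Y_n\cdots Y_1$ and collapsing the interior inner products $v_k^{\mathsf T}u_{k-1}=1+x_k/x_{k-1}$ telescopes the product to the clean form
\[
S_n=\Bigg(\prod_{k=2}^n\Big(1+\tfrac{x_k}{x_{k-1}}\Big)\Bigg)\,u_n v_1^{\mathsf T}.
\]
This is the decisive step: it reduces the matrix product to a scalar product times a rank-one ``boundary'' matrix $u_nv_1^{\mathsf T}$ that depends only on the first and last indices.

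Taking the Hilbert--Schmidt norm gives $\Vert S_n\Vert=\big|\prod_{k=2}^n(1+x_k/x_{k-1})\big|\cdot\Vert u_n\Vert\,\Vert v_1\Vert$, so
\[
\tfrac1n\log\Vert S_n\Vert=\tfrac1n\sum_{k=2}^n\log\Big|1+\tfrac{x_k}{x_{k-1}}\Big|+\tfrac1n\log\big(\Vert u_n\Vert\,\Vert v_1\Vert\big).
\]
I would first dispose of the boundary term: since $\Vert v_1\Vert$ is fixed and $\Vert u_n\Vert^2=1+1/r_n^2$, the moment hypothesis $\mathbb E[\log(1+1/r_1)]<\infty$ forces $\frac1n\log\Vert u_n\Vert\to0$ almost surely by the standard Borel--Cantelli argument for i.i.d.\ sequences. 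For the main sum, note that $\{\log|1+x_k/x_{k-1}|\}_{k\ge2}$ is a stationary sequence that is a fixed function of two consecutive entries of the i.i.d.\ sequence $\{x_j\}$, hence $1$-dependent and in particular ergodic; this is exactly the $m$-dependence structure ($m=1$) foreshadowed in the abstract. A short computation using $2+r_2/r_1\le2(1+r_2)(1+1/r_1)$ together with the two moment hypotheses shows the positive part of $\log|1+x_2/x_1|$ is integrable, so Birkhoff's ergodic theorem (equivalently, the strong law for $m$-dependent sequences) yields $\frac1n\sum_{k=2}^n\log|1+x_k/x_{k-1}|\to\mathbb E\big[\log|1+x_2/x_1|\big]$ almost surely, valid in $[-\infty,\infty)$.

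It remains to evaluate $\mathbb E[\log|1+x_2/x_1|]$. Writing $x_2/x_1=(\epsilon_2/\epsilon_1)(r_2/r_1)$ and conditioning on the signs, the ratio $\epsilon_2/\epsilon_1$ equals $+1$ with probability $p^2+(1-p)^2$ and $-1$ with probability $2p(1-p)$, and is independent of $r_2/r_1>0$. Since $1+r_2/r_1>0$ we may drop the absolute value in the first case, giving precisely \eqref{eq:lyapunov_old}. The one genuine subtlety---and the step I expect to require the most care---is the singular term $\mathbb E[\log|1-r_2/r_1|]$: its integrand diverges to $-\infty$ as $r_2/r_1\to1$, so one must confirm that only its \emph{negative} part can be infinite (which is harmless for the almost sure limit, yielding $\lambda=-\infty$ in degenerate cases) while its positive part remains integrable under the stated hypotheses. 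Everything else is routine bookkeeping once the telescoping identity is in hand.
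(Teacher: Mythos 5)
Your argument is correct, and its engine --- the collapse of $S_n$ to a scalar product of terms $1+x_k/x_{k-1}$ times the rank-one boundary matrix $u_nv_1^{\mathsf T}$ --- is exactly the paper's Lemma \ref{lem:product_form} (you obtain it via the outer-product factorization $Y_j=u_jv_j^{\mathsf T}$ rather than by induction, which is a slicker derivation of the same identity), and your final sign-conditioning step is precisely the reduction indicated in Remark \ref{rem:lyapunov_new}. Where you genuinely diverge is in how the almost sure limit is established. The paper (in proving the generalization, Theorem \ref{thm:lyapunov_new}) first invokes Furstenberg--Kesten \cite[Theorem 2]{FurKest}, which under $\mathbb{E}[\log^+\Vert Y_1\Vert]<\infty$ already guarantees that $\frac1n\log\Vert S_n\Vert$ converges a.s.\ to the deterministic $\lambda$ of \eqref{eq:lyapunov_def}; the product form is then used only to compute the limit of $\frac1n\mathbb{E}[\log\Vert S_n\Vert]$, so no ergodic-theoretic input is needed. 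You instead bypass Furstenberg--Kesten entirely and extract the a.s.\ limit directly from the $1$-dependent sum via Birkhoff/the $m$-dependent strong law. That is legitimate and arguably more self-contained, but it is the one place where you should be precise: you need the version of the ergodic theorem valid when only the positive part of the summand is integrable (so the limit may be $-\infty$), and you should justify ergodicity of the block-factor sequence (it is a factor of the i.i.d.\ shift, hence ergodic; alternatively, splitting the sum into even- and odd-indexed terms gives two i.i.d.\ sums to which the classical extended SLLN applies, which is the most elementary route). Your treatment of the boundary term and of the possibly infinite negative part of $\mathbb{E}[\log|1-r_2/r_1|]$ matches the paper's handling in \eqref{eq:remainder} and \eqref{eq:product}. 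In short: same key lemma, different (and slightly more elementary, but slightly more delicate) mechanism for the almost sure convergence.
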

\begin{rem}\label{rem:ABL_implicit}
The statement of Proposition \ref{prop:ABL_Lyapunov} used here is different than what appears in \cite[Theorem 17.2]{Hills} since we want to make explicit the sign independence condition which is implicitly assumed in that article. For instance, it is not clear from \cite{Hills} that their formula (17.23) \emph{cannot} be applied in cases such as Example \ref{ex:uniform} when $a>0$ and $a\neq b$. That is, when the uniform random variable takes both signs with the interval not being symmetric about $0$.
\end{rem}

\subsection{CLT for products of random matrices}\label{sec:variance}

There is also a central limit theorem (CLT) for the product of $\mbox{GL}(d,\mathbb{R})$-valued random matrices due to Tutubalin \cite{Tutubalin}, Le Page \cite{Le_Page}, and others; see \cite[Theorem V.5.4]{Bougerol} and also \cite{GL_CLT} for a recent version with the optimal moment condition. This limit theorem provides reasonable conditions on the distribution of the matrices which guarantee that
\[
\frac{1}{\sqrt{n}}\big(\log\left\| S_n\right\| -n\lambda\big)\stackrel{\mathcal{L}}{\to}N(0,\sigma^2)
\]
where by $\stackrel{\mathcal{L}}{\to}$ we mean convergence in law and $N(0,\sigma^2)$ denotes a Gaussian random variable with mean $0$ and variance $\sigma^2$. Following the discussion in Section \ref{sec:random_Hills}, we note that a comparable CLT for the random singular matrices of the form \eqref{eq:1} would have significance for describing the fluctuations of the solutions of the random Hills equation \eqref{eq:Hills_ODE} in the unstable regime where the approximation \eqref{eq:approx} is valid. 

Compared to $\lambda$, much less is known about $\sigma^2$. A perturbative approach was used in \cite{perturbative} to compute the leading order term in an asymptotic expansion for the variance of the product of random matrices which are close to the identity. In \cite{Rajeshwari-etall-2020}, Monte Carlo simulation was used to approximate the variance in a specific $2\times 2$ matrix model where $\lambda$ is known exactly. As for explicit formulas, in the special case where the distribution of $V_\mathbf{x}:=\|Y_1\mathbf{x}\|/\|\mathbf{x}\|$ does not depend on $\mathbf{x}\in\mathbb{R}^d\setminus\{\mathbf{0}\}$, Cohen and Newman \cite[Proposition 2.1]{newman} proved that
\begin{equation}\label{eq:newman}
\lambda=\mathbb{E}\left[\log V_\mathbf{x}\right]~\text{ and }~\sigma^2=\mathbb{E}\left[\left(\log V_\mathbf{x}-\lambda\right)^2\right]
\end{equation}
hold whenever $\mathbb{E}[(\log V_\mathbf{x})^2]<\infty$. In particular, these formulas are valid when all of the matrix entries are $\mathrm{i.i.d.}$ normal random variables. 

More recently, the generalized Lyapunov exponent has been used by Comtet, Texier, and Tourigny \cite{Texier, Comtet} to obtain results on the variance in the CLT for the product of $\mbox{SL}(2,\mathbb{R})$-valued random matrices of a particular form. Their work builds upon the transfer operator method used by Tutubalin \cite{Tutubalin} and is able provide explicit formulas for $\sigma^2$ in terms of certain special functions in the specific cases that they consider.

As discussed in Section \ref{sec:Lyapunov} in the context of the Lyapunov exponent, the singularity of the matrices \eqref{eq:1} poses a challenge for proving a CLT for their products since most of the existing theory is only relevant to $\mbox{GL}(d,\mathbb{R})$-valued matrices. Moreover, by considering the vectors $\mathbf{x}=(1,0)$ and $\mathbf{x}'=(0,1)$, it is clear that Cohen and Newman's result \eqref{eq:newman} also does not apply in this case since the distributions of $x_1$ and $1/x_1$ are different in general. So in order to establish the CLT and accompanying explicit formula for the variance which appears in Theorem \ref{thm:Main} below, we had to exploit a novel connection to the theory of $m$-dependent sequences which also leads to an interesting and precise nondegeneracy condition. 

The remainder of the paper is organized as follows. We state our main theorems in Section \ref{sec:main_results}. In Section \ref{sec:prelim}, we recall some aspects of the theory of $m$-dependent sequences and prove some preliminary results. The main theorems are proved in Section \ref{sec:proofs}. Finally, we conclude the paper in Section \ref{sec:examples} with several exact calculations of the Lyapunov exponent and variance in the CLT when the $x_j$ appearing in the matrices \eqref{eq:1} are binary, uniform, exponential, and Laplace random variables.

\section{Main results}\label{sec:main_results}

The main contributions of this paper, which are further detailed below, can be summarized as follows:
\begin{enumerate}
\item We fully generalize Proposition \ref{prop:ABL_Lyapunov} to all $\mathbb{R}\setminus\{0\}$-valued random variables satisfying an equivalent moment condition while simplifying the explicit formula for the Lyapunov exponent. This is done in Theorem \ref{thm:lyapunov_new}.
\item Under this general framework, we prove a CLT with an explicit formula for the variance for the product of the random singular matrices \eqref{eq:1} and also provide a precise nondegeneracy condition. %The proof involves a novel connection to the theory of $m$-dependent sequences involving classical and recent results. 
This is done in Theorem \ref{thm:Main}. 
\end{enumerate}

Our first main result is to generalize Proposition \ref{prop:ABL_Lyapunov} of Adams--Bloch--Lagarias to matrices of the form \eqref{eq:1} where the $x_j$ are $\mathbb{R}\setminus\{0\}$-valued random variables satisfying an equivalent moment condition. Unlike Proposition \ref{prop:ABL_Lyapunov}, we make no assumptions on the distribution of the $x_j$ beyond the moment condition \eqref{eq:two_moments}. In particular, our result is applicable regardless of whether the $x_j$ can be factored into a radial part and an independent random sign; see Remarks \ref{rem:ABL_implicit} and \ref{rem:lyapunov_new}.

\begin{thm}\label{thm:lyapunov_new}
Consider the random singular matrices of the form 
\[
Y_j=\left[\begin{array}{cc}
1 & x_j\\
\frac{1}{x_j} & 1
\end{array}\right],~j\geq 1
\]
where $\{x_j\}_{j\geq 1}$ is an i.i.d. sequence of $\mathbb{R}\setminus\{0\}$-valued random variables such that
\begin{equation}\label{eq:two_moments}
\mathbb{E}\left[\log\big(1+|x_1|\big)\right]<\infty~\text{ and }~\mathbb{E}\left[\log\left(1+\frac{1}{|x_1|}\right)\right]<\infty.
\end{equation}
Then almost surely
\[
\lambda=\lim_{n\to\infty}\frac{1}{n}\log\left\Vert S_n\right\Vert<\infty
\]
and the Lyapunov exponent has the explicit representation
\begin{equation}\label{eq:lyapunov_new}
\lambda=\mathbb{E}\Bigg[\log\left|1+\frac{x_2}{x_1}\right|\Bigg].
\end{equation}
\end{thm}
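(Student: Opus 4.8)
My plan is to exploit the rank-one structure of the matrices $Y_j$ to obtain a closed-form expression for the product $S_n$, after which the Lyapunov exponent will reduce to a telescoping sum whose terms can be analyzed by the strong law of large numbers. The key observation is that each matrix factors as a rank-one matrix times a scalar:
\[
Y_j=\left[\begin{array}{c}1\\ \frac{1}{x_j}\end{array}\right]\left[\begin{array}{cc}1 & x_j\end{array}\right],
\]
so that $Y_j=u_j v_j^{\top}$ where $u_j=(1,1/x_j)^{\top}$ and $v_j=(1,x_j)^{\top}$. The crucial consequence is that the product of two consecutive factors collapses according to $v_j^{\top}u_{j-1}=1+x_j/x_{j-1}$, a scalar. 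Iterating this, I would show that
\[
S_n=Y_n\cdots Y_1=\left(\prod_{j=2}^{n}\Big(1+\tfrac{x_j}{x_{j-1}}\Big)\right)u_n\,v_1^{\top},
\]
i.e. the entire product is a single rank-one matrix scaled by the product of the consecutive ratios $1+x_j/x_{j-1}$.

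Granting this factorization, the Hilbert–Schmidt (or any equivalent) norm of $S_n$ factors multiplicatively:
\[
\|S_n\|=\left|\prod_{j=2}^{n}\Big(1+\tfrac{x_j}{x_{j-1}}\Big)\right|\cdot\|u_n\|\,\|v_1\|,
\]
so that
\[
\frac{1}{n}\log\|S_n\|=\frac{1}{n}\sum_{j=2}^{n}\log\left|1+\frac{x_j}{x_{j-1}}\right|+\frac{1}{n}\big(\log\|u_n\|+\log\|v_1\|\big).
\]
The boundary term $\tfrac{1}{n}\big(\log\|u_n\|+\log\|v_1\|\big)$ involves $\log\|v_1\|=\tfrac12\log(1+x_1^2)$ and $\log\|u_n\|=\tfrac12\log(1+1/x_n^2)$, both of which should vanish almost surely after division by $n$ under the moment condition \eqref{eq:two_moments}; here I would invoke a Borel–Cantelli argument to control the single random term $\log\|u_n\|$ at index $n$. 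The main term is a sum of identically distributed variables $\log|1+x_j/x_{j-1}|$, and since the summands $\log|1+x_j/x_{j-1}|$ depend on overlapping pairs $(x_{j-1},x_j)$, the sequence is $1$-dependent rather than independent. Nonetheless, each summand has the same distribution as $\log|1+x_2/x_1|$, and the ergodic theorem (or a law of large numbers for $1$-dependent stationary sequences) gives the almost sure limit $\mathbb{E}\big[\log|1+x_2/x_1|\big]$, which is exactly \eqref{eq:lyapunov_new}.

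The main obstacle I anticipate is establishing the finiteness and integrability needed to justify both the almost sure convergence and the claim that $\lambda<\infty$. I must verify that $\mathbb{E}\big|\log|1+x_2/x_1|\big|<\infty$ using only the two-sided moment assumption \eqref{eq:two_moments}; the delicate point is the singularity where $x_2/x_1$ approaches $-1$, which makes $\log|1+x_2/x_1|$ blow up to $-\infty$, so I need to bound the negative part of the integrand. I would split the domain and use the elementary bound $\big|\log|1+t|\big|\le \log(1+|t|)+\log(1+1/|1+t|)$-type estimates, controlling the large-$|t|$ regime by $\mathbb{E}[\log(1+|x_2|)]$ together with $\mathbb{E}[\log(1+1/|x_1|)]$ and the small-$|1+t|$ regime by independence of $x_1,x_2$ (so that conditionally on $x_1$, the density of $x_2$ near $-x_1$ contributes a controllable logarithmic singularity). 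The independence of $x_1$ and $x_2$ is essential here: it lets me integrate out one variable and reduce the singularity estimate to the given one-dimensional moment bounds.
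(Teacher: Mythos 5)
Your rank-one factorization $Y_j=u_jv_j^{\top}$ with $v_j^{\top}u_{j-1}=1+x_j/x_{j-1}$ recovers exactly the paper's Lemma \ref{lem:product_form} (there proved by induction), the multiplicative splitting of $\|S_n\|$ is the same as \eqref{eq:product_block}, your Borel--Cantelli treatment of the boundary terms $\log\|v_1\|$ and $\log\|u_n\|$ is sound, and your bound $\log\left|1+\frac{x_2}{x_1}\right|\leq\log\left(1+\frac{1}{|x_1|}\right)+\log\big(1+|x_2|\big)$ for the positive part is precisely the paper's \eqref{eq:product}.

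The genuine gap is in your final step: you set out to prove $\mathbb{E}\big[\big|\log|1+x_2/x_1|\big|\big]<\infty$, but this is false under the stated hypotheses, and the theorem is true without it. Take $x_j=\pm a$ with equal probability: then $1+x_2/x_1=0$ with probability $\tfrac12$, so the negative part of $\log|1+x_2/x_1|$ has infinite expectation (indeed $S_n=0$ eventually and $\lambda=-\infty$, which the conclusion ``$\lambda<\infty$'' deliberately permits; note \eqref{eq:product} in the paper explicitly allows the value $-\infty$). Your proposed remedy for the singularity at $x_2\approx -x_1$ assumes the $x_j$ have a density, which is not among the hypotheses --- the distribution is completely general, and atoms are exactly where the problem lives. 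Two fixes are available. First, you could invoke the one-sided Birkhoff ergodic theorem, valid when only the positive part is integrable and yielding almost sure convergence to a limit in $[-\infty,\infty)$; since $\log|1+x_{j+1}/x_j|$ is a factor of the i.i.d.\ (hence ergodic) shift, this closes your argument as written. Second, you could do what the paper does: obtain the almost sure existence of $\lambda$ from Furstenberg--Kesten (which you never invoke) and then identify $\lambda$ as $\lim_{n\to\infty}\frac{1}{n}\mathbb{E}\big[\log\|S_n\|\big]$, where the decomposition \eqref{eq:split} and linearity of expectation (legitimate because all positive parts are integrable) give $\mathbb{E}\big[\log|1+x_2/x_1|\big]$ whether or not this quantity is $-\infty$. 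Either way, the attempt at two-sided integrability should be abandoned rather than repaired.
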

\begin{rem}\label{rem:lyapunov_new}
If the $x_j$ can be independently factored into $\mathrm{i.i.d.}$ radial parts $r_j>0$ and $\mathrm{i.i.d.}$ signs $\epsilon_j\in\{-1,1\}$, then the explicit representation \eqref{eq:lyapunov_new} is equivalent to that of \eqref{eq:lyapunov_old}. Hence Proposition \ref{prop:ABL_Lyapunov} can be seen as a special case of Theorem \ref{thm:lyapunov_new}.
\end{rem}
\begin{rem}\label{rem:Lyapunov_scale}
The Lyapunov exponent $\lambda$ is invariant under a scale transformation of the $x_j$; see also Remark \ref{rem:variance_scale} and the examples in Section \ref{sec:examples}.
\end{rem}

The second main result of this paper is the following CLT for the product of the random matrices featured in Theorem \ref{thm:lyapunov_new} and a formula for the variance $\sigma^2$. Besides the singularity of the matrices, the main novelty of this result is that the formula for $\sigma^2$ is given explicitly in terms of the distribution of the $x_j$ matrix entries. Moreover, aside from the moment condition \eqref{eq:moment_2}, this distribution can be completely general. As far as the authors know, this is the only nontrivial example where an explicit formula exists for the variance outside of the special cases discussed in Section \ref{sec:variance}. Additionally, in what may be a first appearance in the literature on products of random matrices, the proof uses both classical and recent results from the theory of $m$-dependent sequences to achieve its explicit variance formula and precise nondegeneracy condition. Perhaps these techniques could be useful in other singular matrix models for proving CLT's with explicit variance. 

\begin{thm}\label{thm:Main}
Suppose that in the setting of Theorem \ref{thm:lyapunov_new} we also have
\begin{equation}\label{eq:moment_2}
\mathbb{E}\left[\Bigg(\log\left|1+\frac{x_2}{x_1}\right|\Bigg)^2\right]<\infty.
\end{equation}
Then  
\[
\frac{1}{\sqrt{n}}\big(\log\left\| S_n\right\| -n\lambda\big)\stackrel{\mathcal{L}}{\to}N(0,\sigma^2)
\]
and the variance has the explicit representation 
\[
\sigma^2=\mathbb{E}\left[\Bigg(\log\left|1+\frac{x_2}{x_1}\right|\Bigg)^2\right]+2\,\mathbb{E}\Bigg[\log\left|1+\frac{x_2}{x_1}\right|\,\log\left|1+\frac{x_3}{x_2}\right|\Bigg]-3\lambda^2.
\]
Moreover, $\sigma^2=0$ if and only if the distribution of the $x_j$ takes one of the following three forms where $0<p<1$ and $a\neq 0$ are arbitrary:
\begin{enumerate}[label=\emph{\roman*.}]
\item $\displaystyle\mathbb{P}(x_j=a)=1$,
\item $\displaystyle\mathbb{P}(x_j=a)=p=1-\mathbb{P}\left(x_j=\big(-3-2\sqrt{2}\,\big)a\right)$,
\item $\displaystyle\mathbb{P}(x_j=a)=p=1-\mathbb{P}\left(x_j=\big(-3+2\sqrt{2}\,\big)a\right)$.
\end{enumerate}
\end{thm}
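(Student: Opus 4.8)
The plan is to exploit the rank-one structure of the $Y_j$. Since $\det Y_j=0$, I would write $Y_j=u_jv_j^{\top}$ with $u_j=(1,1/x_j)^{\top}$ and $v_j=(1,x_j)^{\top}$. The product then telescopes: each interior pairing $v_j^{\top}u_{j-1}=1+x_j/x_{j-1}$ is a scalar that factors out, leaving
\[
S_n=\Bigg(\prod_{j=2}^n\bigl(1+\tfrac{x_j}{x_{j-1}}\bigr)\Bigg)\,u_n v_1^{\top}.
\]
Taking the Hilbert--Schmidt norm and using $\|u_nv_1^{\top}\|=\|u_n\|\,\|v_1\|$ gives
\[
\log\|S_n\|=\sum_{j=2}^n\xi_j+\log\|u_n\|+\log\|v_1\|,\qquad \xi_j:=\log\bigl|1+\tfrac{x_j}{x_{j-1}}\bigr|.
\]
The boundary term $\log\|u_n\|+\log\|v_1\|$ has a fixed distribution controlled by \eqref{eq:two_moments}, so after dividing by $\sqrt n$ it vanishes in probability and is discarded; this also shows the CLT is norm-independent.

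The sequence $\{\xi_j\}$ is strictly stationary, and since $\xi_j$ is a function of $(x_{j-1},x_j)$ alone, it is $1$-dependent: $\xi_j$ and $\xi_k$ are independent once $|j-k|\ge 2$. Under \eqref{eq:moment_2} we have $\xi_j\in L^2$ with $\mathbb{E}[\xi_j]=\lambda$ by Theorem \ref{thm:lyapunov_new}, so the classical Hoeffding--Robbins CLT for $m$-dependent stationary sequences applies and yields $\frac{1}{\sqrt n}\sum_{j=2}^n(\xi_j-\lambda)\stackrel{\mathcal L}{\to}N(0,\sigma^2)$ with $\sigma^2=\Var(\xi_2)+2\,\mathrm{Cov}(\xi_2,\xi_3)$. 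Writing this out, $\sigma^2=\mathbb{E}[\xi_2^2]-\lambda^2+2(\mathbb{E}[\xi_2\xi_3]-\lambda^2)=\mathbb{E}[\xi_2^2]+2\,\mathbb{E}[\xi_2\xi_3]-3\lambda^2$, which is exactly the stated formula.

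For the nondegeneracy I would use the Hoeffding/ANOVA decomposition $\xi_j-\lambda=a(x_{j-1})+b(x_j)+r(x_{j-1},x_j)$, where $a(u)=\mathbb{E}[\log|1+x/u|]-\lambda$, $b(v)=\mathbb{E}[\log|1+v/x|]-\lambda$, and $r$ is doubly centered, i.e.\ $\mathbb{E}[r(u,x)]=\mathbb{E}[r(x,v)]=0$. The orthogonality relations make every cross term in $\Var(\xi_2)+2\,\mathrm{Cov}(\xi_2,\xi_3)$ vanish and collapse it to the clean identity
\[
\sigma^2=\mathbb{E}\bigl[(a(x)+b(x))^2\bigr]+\mathbb{E}\bigl[r(x_1,x_2)^2\bigr],
\]
which in particular makes $\sigma^2\ge 0$ manifest. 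Hence $\sigma^2=0$ if and only if $a+b\equiv 0$ and $r\equiv 0$ almost surely, which is equivalent to the coboundary representation $\log\bigl|1+\tfrac{x_2}{x_1}\bigr|-\lambda=g(x_1)-g(x_2)$ a.s.\ for some measurable $g$; this is the point at which the recent $m$-dependent degeneracy results enter.

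It remains to solve this functional equation, which I expect to be the main obstacle. Rewriting the coboundary as $\log|u+v|=A(u)+B(v)$ for $\mu\otimes\mu$-a.e.\ pair (with $\mu=\mathrm{law}(x)$), I would eliminate $A,B$ by forming the second difference over quadruples of support points, obtaining $|u_1+v_1|\,|u_2+v_2|=|u_1+v_2|\,|u_2+v_1|$ a.e. Fixing three distinct support points $\alpha,\beta_1,\beta_2$ and letting the fourth coordinate $s$ vary forces $|s+\beta_1|/|s+\beta_2|$ to be constant, so $\mu$-a.e.\ point is a root of a fixed quadratic; by Fubini such a good triple exists whenever $\mu$ is not already supported on two points, and therefore $\mu$ must be concentrated on at most two points. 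This reduction from arbitrary to finite support, handled carefully through the null-set and non-atomic issues, is the crux. Once the support is $\{a\}$ or $\{a,b\}$, the coboundary on the diagonal forces $\lambda=\log 2$, and the off-diagonal equations reduce to $|a+b|^2=4|a||b|$, whose only distinct (necessarily opposite-signed) solutions are $b/a=-3\pm 2\sqrt 2$. A direct check that each resulting law yields a genuine coboundary, hence $\sigma^2=0$, then establishes the classification into cases i--iii.
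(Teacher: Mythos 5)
Your proposal is correct, and while the CLT half runs parallel to the paper, your treatment of the degeneracy classification is a genuinely different argument. For the first half: your rank-one telescoping $S_n=\bigl(\prod_{j=2}^n(1+\tfrac{x_j}{x_{j-1}})\bigr)u_nv_1^{\top}$ is the same identity the paper proves by induction in Lemma \ref{lem:product_form}, and the $1$-dependence, Slutsky step, and variance formula $C_0+2C_1$ all match; the only nitpick is that you invoke Hoeffding--Robbins, whose classical form assumes third moments, whereas under \eqref{eq:moment_2} alone you should cite Diananda's second-moment version (which is what the paper uses). The real divergence is in the nondegeneracy proof. The paper gets the coboundary representation from Janson's general characterization of degenerate sums of $m$-dependent variables, and then splits into cases: a non-atomic law is ruled out by a quartic-roots contradiction, while an atom at $a$ forces $\lambda=\log 2$ and confines the support to the three-point set $\{a,(-3\pm2\sqrt2)a\}$, after which one configuration is eliminated by hand. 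You instead derive the coboundary directly from the Hoeffding/ANOVA decomposition of the $2$-block factor, via the identity $\sigma^2=\mathbb{E}[(a(x)+b(x))^2]+\mathbb{E}[r(x_1,x_2)^2]$ (which checks out: the shared-variable covariance contributes exactly $2\,\mathbb{E}[a(x)b(x)]$ and all other cross terms vanish), and then reduce the functional equation $\log|u+v|=A(u)+B(v)$ by a multiplicative second difference to the cross-ratio identity $|u_1+v_1||u_2+v_2|=|u_1+v_2||u_2+v_1|$, which pins $\mu$ to at most two atoms in one stroke. Your route buys three things: it makes $\sigma^2\ge 0$ manifest, it needs no external degeneracy theorem (the ANOVA decomposition is available precisely because $A_j$ is a block factor, whereas Janson's result covers general $1$-dependent sequences), and it treats the atomic and non-atomic cases uniformly; the paper's route produces the explicit three-point candidate set more concretely. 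Your Fubini step should be phrased as ``whenever $\mu$ is not a point mass'' (you only need two distinct support points $\beta_1\neq\beta_2$ to get a nondegenerate quadratic in $s$), but with that and the null-set bookkeeping you already flag, the endgame --- $\lambda=\log 2$ on the diagonal, $(a+b)^2=4|ab|$ off the diagonal, hence $b/a=-3\pm2\sqrt2$, plus the converse verification --- agrees with the paper's Lemma \ref{lem:binary}.
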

\begin{rem}\label{rem:degenerate}
While the existence of nondegenerate distributions for the $x_j$ such that $\sigma^2=0$ may seem surprising at first, we give an heuristic explanation for this fact in Section \ref{sec:binary}. Proving that $\mathrm{ii.}$ and $\mathrm{iii.}$ are the only such distributions is nontrivial and relies on a result of Janson \cite{Janson}; see the proof of Lemma \ref{lem:nondegenerate}.
\end{rem}
\begin{rem}\label{rem:variance_scale}
The variance $\sigma^2$ is invariant under a scale transformation of the $x_j$; see also Remark \ref{rem:Lyapunov_scale} and the examples in Section \ref{sec:examples}.
\end{rem}

%%%%%%%%%%%%%%%%%%%%%

\section{Preliminary results}\label{sec:prelim}

\subsection{On \texorpdfstring{$m$}{m}-dependent sequences and block factors}

The proof of Theorem \ref{thm:Main} uses a CLT for stationary sequences of $m$-dependent random variables due to Diananda \cite{Diananda}. For our purposes, a sequence of random variables $\{X_j\}_{j\geq 1}$ is called a \emph{stationary sequence} if it has the same law as the shifted sequence $\{X_{j+k}\}_{j\geq 1}$ for any shift $k\geq 1$. Moreover, we say that $\{X_j\}_{j\geq 1}$ is an $m$-\emph{dependent sequence} if $\{X_1,\dots,X_j\}$ is independent of $\{X_k,X_{k+1},\dots\}$ whenever $k-j>m$. 

Stationary $m$-dependent sequences often arise from $\mathrm{i.i.d.}$ sequences in the following manner. Let $\{x_j\}_{j\geq 1}$ be an $\mathrm{i.i.d.}$ sequence of random variables and let $f:\mathbb{R}^k\to\mathbb{R}$ be a measurable function. For each $j\geq 1$, define $X_j=f(x_j,\dots,x_{j+k-1})$. Then the sequence $\{X_j\}_{j\geq 1}$ is called a $k$-\emph{block factor} of the $\mathrm{i.i.d.}$ sequence $\{x_j\}_{j\geq 1}$. It is clear that any $k$-block factor of an $\mathrm{i.i.d.}$ sequence is a stationary $(k-1)$-dependent sequence, though the converse is not true in general; see \cite{2_block, blocks, recent_blocks}.

Unlike the classical CLT for $\mathrm{i.i.d.}$ sequences, the limit in the CLT for a stationary $m$-dependent sequence $\{X_j\}_{j\geq 1}$ can be degenerate with zero variance even if $\Var X_j>0$. However, Janson \cite{Janson} has completely characterized the particular case in which this can occur and we use his results to prove the precise nondegeneracy condition featured in Theorem \ref{thm:Main}.

\subsection{Product form}

The following lemma expresses the partial products of the matrices \eqref{eq:1} in a convenient form which reveals a hidden $2$-block factor structure.

\begin{lem}\label{lem:product_form}
Consider matrices of the form 
\[
Y_j=\left[\begin{array}{cc}
1 & x_j\\
\frac{1}{x_j} & 1
\end{array}\right],~j\geq 1
\]
where $x_j\in\mathbb{R}\setminus\{0\}$. Then for each $n\in\mathbb{N}$, the partial products $S_n=Y_n\cdots Y_2\cdot Y_1$ can be expressed as
\begin{equation}\label{eq:matrix_factor}
S_n=a_n\left[\begin{array}{cc}
1 & x_1 \\
\frac{1}{x_n} & \frac{x_1}{x_n}
\end{array}\right]
\end{equation}
where
\[
a_n=\prod_{j=1}^{n-1}\left(1+\frac{x_{j+1}}{x_j}\right).
\]
\end{lem}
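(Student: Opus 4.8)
The plan is to prove the product formula \eqref{eq:matrix_factor} by induction on $n$. The base case $n=1$ is immediate: the empty product gives $a_1=1$ (an empty product equals $1$), and the matrix $\left[\begin{smallmatrix}1 & x_1\\ \frac{1}{x_1} & \frac{x_1}{x_1}\end{smallmatrix}\right]=\left[\begin{smallmatrix}1 & x_1\\ \frac{1}{x_1} & 1\end{smallmatrix}\right]=Y_1=S_1$, as desired. For the inductive step, I would assume the formula holds for $S_n$ and then compute $S_{n+1}=Y_{n+1}\cdot S_n$ directly by matrix multiplication, factoring out the scalar $a_n$ and the new factor coming from $Y_{n+1}$.

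Concretely, the key computation is to multiply
\[
Y_{n+1}\left[\begin{array}{cc}
1 & x_1\\
\frac{1}{x_n} & \frac{x_1}{x_n}
\end{array}\right]
=\left[\begin{array}{cc}
1 & x_{n+1}\\
\frac{1}{x_{n+1}} & 1
\end{array}\right]\left[\begin{array}{cc}
1 & x_1\\
\frac{1}{x_n} & \frac{x_1}{x_n}
\end{array}\right].
\]
Carrying out the product, each entry acquires a common factor of $\left(1+\frac{x_{n+1}}{x_n}\right)$: for instance the $(1,1)$ entry is $1+\frac{x_{n+1}}{x_n}$, and the $(2,1)$ entry is $\frac{1}{x_{n+1}}+\frac{1}{x_n}=\frac{1}{x_{n+1}}\left(1+\frac{x_{n+1}}{x_n}\right)$. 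The crucial algebraic observation is that after pulling out this common factor, the resulting matrix is exactly $\left[\begin{smallmatrix}1 & x_1\\ \frac{1}{x_{n+1}} & \frac{x_1}{x_{n+1}}\end{smallmatrix}\right]$, which has the same form as \eqref{eq:matrix_factor} but with the index $n$ replaced by $n+1$. Multiplying the scalar $a_n$ by this new factor gives $a_{n+1}=a_n\left(1+\frac{x_{n+1}}{x_n}\right)=\prod_{j=1}^{n}\left(1+\frac{x_{j+1}}{x_j}\right)$, completing the induction.

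The step I expect to require the most care is verifying that all four entries of the product factor cleanly through the single scalar $\left(1+\frac{x_{n+1}}{x_n}\right)$, leaving behind a matrix whose right column is $x_1$ times its left column and whose bottom row is $\frac{1}{x_{n+1}}$ times its top row. This rank-one structure is what makes the partial products singular and is the heart of why the formula collapses so neatly; it reflects the fact that each $Y_j$ has rank one, so every partial product has rank one and is determined by a single scalar together with the first and last indices $x_1$ and $x_n$. Once this factorization is confirmed for a general entry, the induction closes without further difficulty.
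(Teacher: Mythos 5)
Your proof is correct and follows essentially the same route as the paper: induction on $n$, with the inductive step carried out by multiplying $Y_{n+1}$ against the assumed form of $S_n$ and factoring the common scalar $\left(1+\frac{x_{n+1}}{x_n}\right)$ out of all four entries. The entry-by-entry verification you sketch matches the paper's computation exactly, so no changes are needed.
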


\begin{proof}[Proof of Lemma \ref{lem:product_form}]
The representation \eqref{eq:matrix_factor} clearly holds for $n=1$. Supposing it holds for an arbitrary $n\in\mathbb{N}$, we have
\begin{align*}
S_{n+1}=Y_{n+1}S_n &=a_n\left[\begin{array}{cc}
1 & x_{n+1}\\
\frac{1}{x_{n+1}} & 1
\end{array}\right]\left[\begin{array}{cc}
1 & x_1 \\
\frac{1}{x_n} & \frac{x_1}{x_n}
\end{array}\right]\\
&=a_n\left[\begin{array}{cc}
1+\frac{x_{n+1}}{x_n} & x_1\left(1+\frac{x_{n+1}}{x_n}\right)\\
\frac{1}{x_{n+1}}\left(1+\frac{x_{n+1}}{x_n}\right) & \frac{x_1}{x_{n+1}}\left(1+\frac{x_{n+1}}{x_n}\right)
\end{array}\right]\\
&=a_{n+1}\left[\begin{array}{cc}
1 & x_1\\
\frac{1}{x_{n+1}} & \frac{x_1}{x_{n+1}}
\end{array}\right].
\end{align*}
Hence the desired result follows by induction.
\end{proof}

Taking the $\log$ of the norm of \eqref{eq:matrix_factor} leads to the identity
\begin{equation}\label{eq:product_block}
\log\|S_n\|=\sum_{j=1}^{n-1}\log\left|1+\frac{x_{j+1}}{x_j}\right|+\log\left\|\left[\begin{array}{cc}
1 & x_1 \\
\frac{1}{x_n} & \frac{x_1}{x_n}
\end{array}\right]\right\|,
\end{equation}
where the partial sum of a $2$-block factor of the $\mathrm{i.i.d.}$ sequence $\{x_j\}_{j\geq 1}$ becomes obvious. This representation will be exploited later in the proof of Theorem \ref{thm:Main}.

\subsection{Binary distributed \texorpdfstring{$x_j$}{x j}}\label{sec:binary}

In this section we evaluate the Lyapunov exponent and variance formulas from Theorems \ref{thm:lyapunov_new} and \ref{thm:Main} when the $x_j$ are general binary random variables. Note that the moment condition \eqref{eq:moment_2} implies that the two values $a,b\neq 0$ that the $x_j$ take must satisfy $a\neq -b$. While the computations are tedious and relatively straightforward, we give a complete proof since this result is essential for proving the nondegeneracy condition of Theorem \ref{thm:Main} and also appears as Example \ref{ex:binary}.

\begin{lem}\label{lem:binary}
Suppose the $x_j$ in Theorems \ref{thm:lyapunov_new} and \ref{thm:Main} are binary random variables taking the two values $a,b\neq 0$ with $a\neq -b$ and $\mathbb{P}\left(x_j=a\right)=p=1-\mathbb{P}\left(x_j=b\right)$. Then we have
\begin{align}
&\mathbb{E}\left[\log\left|1+\frac{x_2}{x_1}\right|\right]=\log 2+p(1-p)\log\left(\frac{(a+b)^2}{4|ab|}\right),\label{eq:bin_lambda}\\
\nonumber\\
&\mathbb{E}\left[\left(\log\left|1+\frac{x_2}{x_1}\right|\right)^2\right]+2\,\mathbb{E}\left[\log\left|1+\frac{x_2}{x_1}\right|\,\log\left|1+\frac{x_3}{x_2}\right|\right]-3\,\mathbb{E}\left[\log\left|1+\frac{x_2}{x_1}\right|\right]^2\nonumber\\
&=p(1-p)\big(1-3p(1-p)\big)\left(\log\left(\frac{(a+b)^2}{4|ab|}\right)\right)^2.\label{eq:bin_sigma}
\end{align}
\end{lem}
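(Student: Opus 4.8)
The plan is to reduce everything to the discrete random variable $\xi_1 = \log\left|1 + \frac{x_2}{x_1}\right|$, which takes only three distinct values since $x_1, x_2 \in \{a, b\}$. Writing $u = \log 2$ for the common value when $x_1 = x_2$, $v = \log\left|\frac{a+b}{a}\right|$ when $(x_1, x_2) = (a, b)$, and $w = \log\left|\frac{a+b}{b}\right|$ when $(x_1, x_2) = (b, a)$, the central observation is the identity $v + w = \log\frac{(a+b)^2}{|ab|} = c + 2u$, where $c = \log\frac{(a+b)^2}{4|ab|}$ is exactly the quantity appearing on the right-hand sides of both \eqref{eq:bin_lambda} and \eqref{eq:bin_sigma}. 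For the Lyapunov exponent I would compute $\mathbb{E}[\xi_1]$ by summing over the four outcomes with weights $p^2,\, p(1-p),\, (1-p)p,\, (1-p)^2$; the terms carrying $u$ combine as $u(p^2 + (1-p)^2) + 2u\, p(1-p) = u$, so the $\log 2$ contributions weighted by $p(1-p)$ cancel and leave $\lambda = u + p(1-p)(v + w - 2u) = u + p(1-p)c$, which is \eqref{eq:bin_lambda}.

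For the variance I would first note that the left-hand side of \eqref{eq:bin_sigma} is precisely $\Var(\xi_1) + 2\,\mathrm{Cov}(\xi_1, \xi_2)$ for the stationary $1$-dependent sequence $\xi_j = \log\left|1 + \frac{x_{j+1}}{x_j}\right|$, since $\mathbb{E}[\xi_1^2] + 2\mathbb{E}[\xi_1\xi_2] - 3\lambda^2 = (\mathbb{E}[\xi_1^2] - \lambda^2) + 2(\mathbb{E}[\xi_1\xi_2] - \lambda^2)$; this is the $m = 1$ instance of the $m$-dependent variance used in Theorem \ref{thm:Main}. The second moment $\mathbb{E}[\xi_1^2] = u^2(p^2 + (1-p)^2) + p(1-p)(v^2 + w^2)$ follows from the same four-case enumeration. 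The cross term $\mathbb{E}[\xi_1\xi_2]$ requires enumerating the eight outcomes of $(x_1, x_2, x_3)$ and grouping them; using $p^2 + (1-p)^2 = 1 - 2p(1-p)$ and $p^3 + (1-p)^3 = 1 - 3p(1-p)$ this gives $\mathbb{E}[\xi_1\xi_2] = (1 - 3p(1-p))u^2 + p(1-p)(uv + uw + vw)$. Substituting these into the combination, writing $s = v + w = c + 2u$ and expanding, all powers of $u$ and the mixed terms cancel down to $p(1-p)(1 - 3p(1-p))c^2$, which is \eqref{eq:bin_sigma}.

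The routine but error-prone part is the eight-case bookkeeping for $\mathbb{E}[\xi_1\xi_2]$ together with the final algebraic cancellation, and the main conceptual obstacle is seeing why the answer depends only on the symmetric combination $c = v + w - 2u$ rather than on $v$ and $w$ separately. The resolution is structural: the asymmetric term $v^2 + w^2$ arising from $\mathbb{E}[\xi_1^2]$ combines with the $2vw$ arising from $2\mathbb{E}[\xi_1\xi_2]$ to form the perfect square $(v + w)^2$, so the precise coefficients in the variance formula of Theorem \ref{thm:Main} are exactly what is needed to erase the individual dependence on $v$ and $w$. Once this is recognized, the whole computation collapses to a function of $c$ alone, and tracking the coefficient of $c^2$ yields the factor $p(1-p)(1 - 3p(1-p))$.
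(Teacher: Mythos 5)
Your proposal is correct and follows essentially the same route as the paper's proof: a direct enumeration of the four (respectively eight) outcomes for the first and second moments and the cross term, followed by the key observation that $v^2+w^2$ from the second moment combines with $2vw$ from the cross term into the perfect square $(v+w)^2$, which is exactly the paper's step of merging its $C$ and $2F$ terms. Your notation $u,v,w,c$ with $v+w=c+2u$ is just a cleaner packaging of the paper's ``$\log 4$ trick,'' and your stated values of $\mathbb{E}[\xi_1]$, $\mathbb{E}[\xi_1^2]$, and $\mathbb{E}[\xi_1\xi_2]$ all check out.
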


\begin{proof}[Proof of Lemma \ref{lem:binary}]
We can prove \eqref{eq:bin_lambda} by writing 
\begin{align*}
&\mathbb{E}\left[\log\left|1+\frac{x_2}{x_1}\right|\right]\\
&=\left(p^2+(1-p)^2\right)\log 2+p(1-p)\left(\log\left|1+\frac{a}{b}\right|+\log\left|1+\frac{b}{a}\right|\right)\\
&=\left(p^2+(1-p)^2\right)\log 2+p(1-p)\log\left(\frac{(a+b)^2}{|ab|}\right)\\
&=\left(p^2+(1-p)^2\right)\log 2+p(1-p)\left(\log 4+\log\left(\frac{(a+b)^2}{4|ab|}\right)\right)\\
&=\log 2+\underbrace{p(1-p)\log\left(\frac{(a+b)^2}{4|ab|}\right)}_{\displaystyle A}.
\end{align*}
Here we have labeled the $A$ term for later use in proving \eqref{eq:bin_sigma}. Note that the trick of extracting a $\log 4$ term will be used twice more in what follows.

Next we compute the first two terms that appear on the left-hand side of \eqref{eq:bin_sigma} by writing
\begin{align*}
&\mathbb{E}\left[\left(\log\left|1+\frac{x_2}{x_1}\right|\right)^2\right]\\
&=\underbrace{\left(p^2+(1-p)^2\right)(\log 2)^2}_{\displaystyle B}+\underbrace{p(1-p)\left(\left(\log\left|1+\frac{a}{b}\right|\right)^2+\left(\log\left|1+\frac{b}{a}\right|\right)^2\right)}_{\displaystyle C}
\end{align*}
and
\begin{align*}
&\mathbb{E}\left[\log\left|1+\frac{x_2}{x_1}\right|\,\log\left|1+\frac{x_3}{x_2}\right|\right]\\
&=\left(p^3+(1-p)^3\right)(\log 2)^2\\
&~~+\left(p^2(1-p)+p(1-p)^2\right)\log 2\left(\log\left|1+\frac{a}{b}\right|+\log\left|1+\frac{b}{a}\right|\right)\\
&~~+\left(p^2(1-p)+p(1-p)^2\right)\log\left|1+\frac{a}{b}\right|\,\log\left|1+\frac{b}{a}\right|\\
&=\underbrace{\big(1-p(1-p)\big)(\log 2)^2}_{\displaystyle D}+\underbrace{p(1-p)\log 2\,\log\left(\frac{(a+b)^2}{4|ab|}\right)}_{\displaystyle E}\\
&~~+\underbrace{p(1-p)\log\left|1+\frac{a}{b}\right|\,\log\left|1+\frac{b}{a}\right|}_{\displaystyle F}.
\end{align*}
The $C$ and $F$ terms can be combined into a perfect square and then expanded using the $\log 4$ trick from before. More precisely, we have 
\begin{align*}
C+2F&=p(1-p)\left(\log\left|1+\frac{a}{b}\right|+\log\left|1+\frac{b}{a}\right|\right)^2\\
&=\underbrace{4p(1-p)(\log 2)^2}_{\displaystyle G}+\underbrace{4p(1-p)\log 2\,\log\left(\frac{(a+b)^2}{4|ab|}\right)}_{\displaystyle H}\\
&~~+\underbrace{p(1-p)\left(\log\left(\frac{(a+b)^2}{4|ab|}\right)\right)^2}_{\displaystyle I}.
\end{align*}

Finally, we combine these expressions for the first two terms that appear on the left-hand side of \eqref{eq:bin_sigma} with the square of the right-hand side of \eqref{eq:bin_lambda} to yield
\begin{align*}
&\mathbb{E}\left[\left(\log\left|1+\frac{x_2}{x_1}\right|\right)^2\right]+2\,\mathbb{E}\left[\log\left|1+\frac{x_2}{x_1}\right|\,\log\left|1+\frac{x_3}{x_2}\right|\right]-3\,\mathbb{E}\left[\log\left|1+\frac{x_2}{x_1}\right|\right]^2\\
&=(B+C)+2(D+E+F)-3\left((\log 2)^2+2A\log 2+A^2\right)\\
&=\left(B+2D-3(\log 2)^2\right)+(C+2F)+\left(2E-6A\log 2\right)-3A^2\\
&=\left(B+2D-3(\log 2)^2+G\right)+\left(2E-6A\log 2+H\right)+\left(I-3A^2\right)\\
&=0+0+\left(p(1-p)-3p^2(1-p)^2\right)\left(\log\left(\frac{(a+b)^2}{4|ab|}\right)\right)^2.
\end{align*}
\end{proof}

Since \eqref{eq:bin_sigma} expresses the variance $\sigma^2$ in Theorem \ref{thm:Main}, any admissible pair of atoms $a$ and $b$ which solves $(a+b)^2=4|ab|$ will correspond to a family of binary distributions indexed by $0<p<1$ that has $\sigma^2=0$. In particular, this leads to the nondegenerate distributions of the form $\mathrm{ii.}$ and $\mathrm{iii.}$ which have zero variance in the CLT of Theorem \ref{thm:Main}; see also Remark \ref{rem:degenerate}. While the existence of such distributions may seem surprising when compared to the classical CLT, we give a nonrigorous heuristic explanation below.

First note that from Lemma \ref{lem:product_form} and its consequence \eqref{eq:product_block}, we have
\[
\log\|S_n\|=\sum_{j=1}^{n-1}\log\left|1+\frac{x_{j+1}}{x_j}\right|+O_p(1)\text{ as }n\to\infty.
\]
Now suppose that the $x_j$ have a binary distribution of the form $\mathrm{ii.}$ or $\mathrm{iii.}$ and define $A_j=\log|1+\frac{x_{j+1}}{x_j}|-\log 2$. Notice that each $A_j$ term can take one of three values: $0$, $\log(1+\sqrt{2})$, or $-\log(1+\sqrt{2})$. Moreover, the last two values will tend to occur in equal proportion and if the $A_j$ were independent, then the deviation of the sum from $0$ would be of order $\sqrt{n}$ and we would have a nondegenerate limit in the CLT. However, the particular $1$-dependent structure of the $A_j$ leads to a significant reduction in this deviation.

To investigate this reduced deviation, it will be instructive to split the sum of $2n$ of the $A_j$ into even and odd parts. Notice that the even part $\sum_{j=1}^n A_{2j}$ is a sum of $n$ $\mathrm{i.i.d.}$ terms with mean $0$ and finite variance so its deviation from $0$ is of order $\sqrt{n}$ and the same can be said for the odd part. However, the deviations have a high negative correlation so they tend to cancel out in the full sum. To see where this negative correlation comes from, notice that if $A_j=\log(1+\sqrt{2})$, then neither $A_{j-1}$ nor $A_{j+1}$ can equal $\log(1+\sqrt{2})$, though they both can equal $-\log(1+\sqrt{2})$. Likewise, if $A_j=-\log(1+\sqrt{2})$, then neither $A_{j-1}$ nor $A_{j+1}$ can equal $-\log(1+\sqrt{2})$, though they both can equal $\log(1+\sqrt{2})$. Hence the even and odd parts tend to have deviations of opposite sign and the resulting cancellation leads to 
\[
\log\|S_n\|=n\log2 +o_p(\sqrt{n})\text{ as }n\to\infty.
\]
Now it follows that $\frac{1}{\sqrt{n}}(\log\|S_n\|-n\log 2)$ has a degenerate limit.

\section{Proofs of the main results}\label{sec:proofs}

\subsection{Proof of Theorem \ref{thm:lyapunov_new}}

\begin{proof}[Proof of Theorem \ref{thm:lyapunov_new}]
Let $\|\cdot\|$ denote the Hilbert-Schmidt norm. Then we have 
\begin{align*}
\left\Vert Y_1\right\Vert&=\sqrt{x_1^2+2+\frac{1}{x_1^2}}\\
&=\sqrt{\left(1+x_1^2\right)\left(1+\frac{1}{x_1^2}\right)}\\
&\geq 1.
\end{align*}
Along with the moment condition \eqref{eq:two_moments} and subadditivity of the square root, this implies that
\begin{align*}
\mathbb{E}\left[\log^+ \left\Vert Y_1\right\Vert\right]&=\mathbb{E}\big[\log\left\Vert Y_1\right\Vert\big]\\
&=\frac{1}{2}\Bigg(\mathbb{E}\left[\log\left(1+x_1^2\right)\right]+\mathbb{E}\left[\log\left(1+\frac{1}{x_1^2}\right)\right]\Bigg)\\
&\leq\mathbb{E}\left[\log\big(1+|x_1|\big)\right]+\mathbb{E}\left[\log\left(1+\frac{1}{|x_1|}\right)\right]\\
&<\infty.
\end{align*}
Hence by \cite[Theorem 2]{FurKest}, almost surely we have
\[
\lambda=\lim_{n\to\infty}\frac{1}{n}\log\left\Vert S_{n}\right\Vert<\infty.
\]

Next we compute $\lambda$. Arguing similarly using \eqref{eq:two_moments}, we know that
\begin{equation}\label{eq:remainder}
\begin{split}
\mathbb{E}\left[\Bigg|\log\left\|\left[\begin{array}{cc}
1 & x_1 \\
\frac{1}{x_n} & \frac{x_1}{x_n}
\end{array}\right]\right\|\Bigg|\right]&=\frac{1}{2}\Bigg(\mathbb{E}\left[\log\left(1+x_1^2\right)\right]+\mathbb{E}\left[\log\left(1+\frac{1}{x_n^2}\right)\right]\Bigg)\\
%&\leq\mathbb{E}\left[\log\big(1+|x_1|\big)\right]+\mathbb{E}\left[\log\left(1+\frac{1}{|x_1|}\right)\right]\\
&<\infty.
\end{split}
\end{equation}
Hence for each $n\geq 1$, we can use Lemma \ref{lem:product_form} and \eqref{eq:product_block} in particular to write
\begin{equation}\label{eq:split}
\mathbb{E}\big[\log\left\| S_n\right\|\big]=\mathbb{E}\left[\sum_{j=1}^{n-1}\log\left|1+\frac{x_{j+1}}{x_j}\right|\right]+\mathbb{E}\Bigg[\log\left\|\left[\begin{array}{cc}
1 & x_1 \\
\frac{1}{x_n} & \frac{x_1}{x_n}
\end{array}\right]\right\|\Bigg].
\end{equation}
Moreover, since 
\begin{align*}
\left|1+\frac{x_2}{x_1}\right|&\leq 1+\frac{|x_2|}{|x_1|}\\
&\leq \left(1+\frac{1}{|x_1|}\right)\Big(1+|x_2|\Big),
\end{align*}
the moment condition \eqref{eq:two_moments} implies
\begin{equation}\label{eq:product}
-\infty\leq\mathbb{E}\Bigg[\log\left|1+\frac{x_2}{x_1}\right|\Bigg]<\infty.
\end{equation}
Now it follows from the definition of $\lambda$ along with \eqref{eq:remainder}, \eqref{eq:split}, and \eqref{eq:product} that
\begin{align*}
\lambda&=\lim_{n\to\infty}\frac{1}{n}\left(\mathbb{E}\left[\sum_{j=1}^{n-1}\log\left|1+\frac{x_{j+1}}{x_j}\right|\right]+\mathbb{E}\Bigg[\log\left\|\left[\begin{array}{cc}
1 & x_1 \\
\frac{1}{x_n} & \frac{x_1}{x_n}
\end{array}\right]\right\|\Bigg]\right)\\
&=\mathbb{E}\Bigg[\log\left|1+\frac{x_2}{x_1}\right|\Bigg].
\end{align*}
\end{proof}

\subsection{Proof of Theorem \ref{thm:Main}}

\begin{proof}[Proof of Theorem \ref{thm:Main}]
First note that by Theorem \ref{thm:lyapunov_new}, the moment condition \eqref{eq:moment_2} implies that $|\lambda|<\infty$. Hence for each $n\geq 1$, we can use \eqref{eq:product_block} to write
\begin{equation}\label{eq:CLT_split}
\log\left\| S_n\right\| -n\lambda=\sum_{j=1}^{n-1}\underbrace{\left(\log\left|1+\frac{x_{j+1}}{x_j}\right|-\lambda\right)}_{\displaystyle A_j}+\underbrace{\log\left\|\left[\begin{array}{cc}
1 & x_1 \\
\frac{1}{x_n} & \frac{x_1}{x_n}
\end{array}\right]\right\|-\lambda}_{\displaystyle B_n}.
\end{equation}

It is clear that $\{A_j\}_{j\geq 1}$ is a $2$-block factor of the $\mathrm{i.i.d.}$ sequence $\{x_j\}_{j\geq 1}$, hence it is also a stationary $1$-dependent sequence. Additionally, we have 
\begin{equation}\label{eq:A_mean}
\mathbb{E}[A_j]=0
\end{equation}
and 
\begin{equation}\label{eq:A_variance}
\mathbb{E}\left[A_j^2\right]=\mathbb{E}\left[\Bigg(\log\left|1+\frac{x_2}{x_1}\right|\Bigg)^2\right]-\lambda^2.
\end{equation}

Define $C_{j-k}=\mathbb{E}[A_j A_k]$ for $j,k\geq 1$. Then $C_0$ is given by \eqref{eq:A_variance}, while $1$-dependence and \eqref{eq:A_mean} imply that $C_j=0$ whenever $|j|>1$. Furthermore, we have
\[
C_1=C_{-1}=\mathbb{E}\Bigg[\log\left|1+\frac{x_2}{x_1}\right|\,\log\left|1+\frac{x_3}{x_2}\right|\Bigg]-\lambda^2.
\]
Now Diananda's CLT for stationary $m$-dependent sequences from \cite[Theorem 2]{Diananda} implies that 
\[
\frac{1}{\sqrt{n}}\sum_{j=1}^n A_j\stackrel{\mathcal{L}}{\to}N(0,\sigma^2)
\]
where 
\begin{align}
\sigma^2&=\sum_{j=-1}^1 C_j\nonumber\\
&=\mathbb{E}\left[\Bigg(\log\left|1+\frac{x_2}{x_1}\right|\Bigg)^2\right]+2\,\mathbb{E}\Bigg[\log\left|1+\frac{x_2}{x_1}\right|\,\log\left|1+\frac{x_3}{x_2}\right|\Bigg]-3\lambda^2.\label{eq:variance}
\end{align}
It follows from the Cauchy--Schwarz inequality and the moment condition \eqref{eq:moment_2} that $\sigma^2<\infty$.

Turning our attention back to \eqref{eq:CLT_split}, an application of Markov's inequality along with \eqref{eq:remainder} shows that $\frac{1}{\sqrt{n}}B_n\to 0$ in probability. Now Slutsky's theorem \cite[Theorem 11.4]{gut_book} can be used to conclude that 
\[
\frac{1}{\sqrt{n}}\big(\log\left\| S_n\right\| -n\lambda\big)\stackrel{\mathcal{L}}{\to}N(0,\sigma^2)
\]
where $\sigma^2$ is given by \eqref{eq:variance}.

It remains to show that $\sigma^2=0$ if and only if the $x_j$ are distributed according to one of the three forms listed in the statement of Theorem \ref{thm:Main}. This is addressed separately in Lemma \ref{lem:nondegenerate} below. 
\end{proof}

\begin{lem}\label{lem:nondegenerate}
The variance $\sigma^2$ in Theorem \ref{thm:Main} equals zero if and only if the distribution of the $x_j$ takes one of the following three forms where $0<p<1$ and $a\neq 0$ are arbitrary:
\begin{enumerate}[label=\emph{\roman*.}]
\item $\displaystyle\mathbb{P}(x_j=a)=1$,
\item $\displaystyle\mathbb{P}(x_j=a)=p=1-\mathbb{P}\left(x_j=\big(-3-2\sqrt{2}\,\big)a\right)$,
\item $\displaystyle\mathbb{P}(x_j=a)=p=1-\mathbb{P}\left(x_j=\big(-3+2\sqrt{2}\,\big)a\right)$.
\end{enumerate}
\end{lem}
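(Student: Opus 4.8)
The plan is to treat the two implications separately and to route the hard direction through Janson's characterization of degenerate block-factor sums.

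The \textbf{\emph{if}} direction is immediate from what has already been established. In case i.\ the sequence is constant, so $x_{j+1}/x_j\equiv 1$, and since $\lambda=\mathbb{E}[\log|1+x_2/x_1|]=\log 2$ here, each summand $A_j:=\log|1+x_{j+1}/x_j|-\lambda\equiv \log 2-\log 2=0$, whence $\sigma^2=0$. Cases ii.\ and iii.\ are binary distributions whose atoms $a,b$ satisfy $(a+b)^2=4|ab|$ (indeed $-3\pm2\sqrt{2}$ are the two roots of $t^2+6t+1=0$, which is exactly $(1+t)^2=4|t|$ for $t<0$), so the factor $\log\big((a+b)^2/(4|ab|)\big)$ appearing in \eqref{eq:bin_sigma} vanishes and Lemma~\ref{lem:binary} gives $\sigma^2=0$.

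For the \textbf{\emph{only if}} direction I would exploit the $2$-block factor structure from \eqref{eq:CLT_split}. Writing $f(x,y)=\log|1+y/x|-\lambda$, the sequence $A_j=f(x_j,x_{j+1})$ is a mean-zero stationary $1$-dependent $2$-block factor of $\{x_j\}$, and the quantity $\sigma^2$ in \eqref{eq:variance} is precisely its asymptotic variance. The key input is Janson's characterization \cite{Janson} of degenerate sums of block factors: $\sigma^2=0$ if and only if $f$ is a coboundary, i.e.\ there is a measurable $g$ with
\[
\log\left|1+\frac{x_{j+1}}{x_j}\right|-\lambda=g(x_{j+1})-g(x_j)\quad\text{a.s.}
\]
(the reverse implication being the telescoping observation sketched in Section~\ref{sec:binary}). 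Correctly invoking this theorem is the crux of the argument; everything after it is elementary algebra.

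Granting the coboundary relation, I would use the symmetry of $\log|x+y|$ in its two arguments. Letting $\mu$ denote the law of $x_1$ (supported on $\mathbb{R}\setminus\{0\}$) and rewriting the relation as $\log|x+y|=\lambda+\log|x|+g(y)-g(x)$ for $(\mu\times\mu)$-a.e.\ $(x,y)$, the left-hand side is symmetric, so comparing with the $x\leftrightarrow y$ swap yields $2g(x)-\log|x|=2g(y)-\log|y|$; hence $g(x)=\tfrac12\log|x|+C$ $\mu$-a.e., and substituting back collapses the relation to
\[
(x+y)^2=e^{2\lambda}\,|xy|\quad\text{for }(\mu\times\mu)\text{-a.e.\ }(x,y).
\]
It remains to classify such $\mu$. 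For each fixed $x\neq 0$ this identity is quadratic in $y$ on each half-line $y>0$ and $y<0$, so it has at most four solutions; by Fubini there is an $x$ for which $\mu$ assigns full mass to this finite set, forcing $\mu$ to be purely atomic with at most four atoms. Evaluating on the diagonal (legitimate since each atom $a_i$ carries mass $p_i^2>0$ under $\mu\times\mu$) gives $(2a_i)^2=e^{2\lambda}a_i^2$, so $e^{2\lambda}=4$ and $\lambda=\log 2$. The pairwise condition then reads $(a_i+a_j)^2=4|a_ia_j|$ for all atoms: two atoms of the same sign must coincide, while two of opposite sign must be in ratio $-3\pm2\sqrt{2}$. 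Finally, the three-atom candidate $\{a,(-3+2\sqrt{2})a,(-3-2\sqrt{2})a\}$ fails, since the outer pair has sum $-6a$ and product $a^2$, violating $(b_1+b_2)^2=4|b_1b_2|$; thus $\mu$ has at most two atoms. One atom gives form i.\ and two opposite-sign atoms in the ratio above give forms ii.\ and iii., completing the classification. The main obstacle is the correct application of Janson's result to extract the coboundary representation; the symmetrization and atom-counting that follow are short computations.
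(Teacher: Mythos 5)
Your proof is correct. It rests on the same key input as the paper's argument---Janson's coboundary characterization \cite[Corollary 2]{Janson} for degenerate block-factor CLTs---but executes the classification after that point by a genuinely different route. The paper splits into a nonatomic case (excluded via the $n=2$ telescoped relation, which gives a quartic in $x_2$ with at most four roots) and an atomic case (where setting $x_1=x_3=a$ in the $n=2$ relation yields the three candidate atoms, and a further evaluation excludes the simultaneous presence of $(-3-2\sqrt{2})a$ and $(-3+2\sqrt{2})a$). You instead symmetrize the $n=1$ relation in $(x,y)$---legitimate since $\mu\times\mu$ is exchangeable---to identify $g=\tfrac{1}{2}\log|\cdot|+C$ up to a $\mu$-null set, and thereby collapse the coboundary condition to the single identity $(x+y)^2=e^{2\lambda}|xy|$ for $(\mu\times\mu)$-a.e.\ $(x,y)$; atomicity of $\mu$, the value $\lambda=\log 2$, and the pairwise constraint on the atoms all then follow from this one equation. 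Your version needs only the $n=1$ relation and avoids the quartic-formula step, and your observation that two distinct atoms of equal sign would violate $(a_i+a_j)^2=4|a_ia_j|$ already caps the support at two points (so the separate three-atom check is redundant, though harmless). What the paper's route buys is that it never has to determine $g$ explicitly, only differences $g(x_3)-g(x_1)$ evaluated at coincident values. Both treatments reduce the ``if'' direction to Lemma \ref{lem:binary} together with the trivial constant case, as they should.
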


\begin{proof}[Proof of Lemma \ref{lem:nondegenerate}]
We know from Lemma \ref{lem:binary} that $\sigma^2=0$ if the $x_j$ are distributed according to any of these three distributions so we can focus on showing that these are the only possible distributions such that $\sigma^2=0$. Accordingly, we now assume that $\sigma^2=0$ and deduce what restrictions this imposes on the distribution of the $x_j$. 

From the proof of Theorem \ref{thm:Main}, we know that $\sigma^2$ is the variance in the CLT for a stationary $1$-dependent sequence that is also a $2$-block factor of an $\mathrm{i.i.d.}$ sequence. Hence if $\sigma^2=0$, we can use \cite[Corollary 2]{Janson} to deduce that there exists a measurable function $g:\mathbb{R}\to\mathbb{R}$ such that for all $n\geq1$,
\begin{equation}\label{JanEquation:g}
\sum_{j=1}^n A_j=g\left(x_{n+1}\right)-g\left(x_1\right)~\text{almost surely}
\end{equation}
with $g$ being almost surely unique up to an additive constant and where $A_j$ was defined in \eqref{eq:CLT_split}. Using this fact, we treat separately the case where the distribution of the $x_j$ has no atoms and the case where it has at least one atom.\\ 

\noindent\textbf{Case 1: no atoms}

First we suppose that the distribution of the $x_j$ is nonatomic and get a contradiction. Using $n=2$ in \eqref{JanEquation:g}, we have
\[
\log\left|\left(1+\frac{x_2}{x_1}\right)\left(1+\frac{x_3}{x_2}\right)\right|-2\lambda=g\left(x_3\right)-g\left(x_1\right)~\text{almost surely},
\]
from which it follows that  
\begin{equation}\label{eq:Jan2-1b}
\left(x_1+x_2\right)^2\left(x_2+x_3\right)^2=x_1^2\, x_2^2\,e^{4\lambda+2g\left(x_3\right)-2g\left(x_1\right)}~\text{almost surely}.
\end{equation}
Expanding \eqref{eq:Jan2-1b} leads to a quartic equation in $x_2$ whose coefficients are functions of $x_1$ and $x_3$. Denote the four solutions of this equation that are given by the quartic formula by $h_i(x_1,x_3)$, $i=1,2,3,4$. Then \eqref{eq:Jan2-1b} implies that 
\begin{equation}\label{eq:4_as}
\mathbb{P}\left(\bigcup_{i=1}^4 \big\{x_2=h_i(x_1,x_3)\big\}\right)=1.
\end{equation}
By \eqref{eq:4_as}, we know that $\mathbb{P}(x_2-h_i(x_1,x_3)=0)>0$ for some $i$. However, since $x_2$ and $h_i(x_1,x_3)$ are independent, this contradicts the fact that the distribution of $x_2$ has no atoms. It follows that if $\sigma^2=0$, then the distribution of the $x_j$ must have at least one atom.\\

\noindent\textbf{Case 2: at least one atom}

Next we suppose that the distribution of the $x_j$ has an atom at $a\neq 0$. Using $n=1$ in \eqref{JanEquation:g}, we have
\begin{equation}\label{eq:Jan1}
\log\left|1+\frac{x_2}{x_1}\right|-\lambda=g\left(x_2\right)-g\left(x_1\right)~\text{almost surely}.
\end{equation}
By independence we have $\mathbb{P}\left(x_{1}=x_{2}=a\right)>0$, so it follows from \eqref{eq:Jan1} that $\mathbb{P}\left(\lambda=\log2\right)>0$ which implies that 
\begin{equation}\label{eq:as_lambda}
\lambda=\log2. 
\end{equation}

Using $n=2$ in \eqref{JanEquation:g} along with \eqref{eq:as_lambda} leads to
\begin{equation}\label{eq:Jan2}
\log\left|\left(1+\frac{x_2}{x_1}\right)\left(1+\frac{x_3}{x_2}\right)\right|-2\log2=g\left(x_3\right)-g\left(x_1\right)~\text{almost surely}.
\end{equation}
Supposing that $\mathbb{P}\left(x_1=a\right)=\alpha>0$, we can use independence and \eqref{eq:Jan2} to write 
\begin{align*}
\alpha^2 & =\mathbb{P}\left(x_1=a,\,x_3=a\right)\\
 & =\mathbb{P}\left(x_1=a,\,x_3=a,\,\left|\left(1+\frac{x_2}{x_1}\right)\left(1+\frac{x_3}{x_2}\right)\right|=4\,e^{g\left(x_3\right)-g\left(x_1\right)}\right)\\
%&=\mathbb{P}\left(x_1=a,\,x_3=a,\,\log\left|1+\frac{x_2}{a}\right|+\log\left|1+\frac{a}{x_2}\right|-2\log2=g\left(a\right)-g\left(a\right)\right)\\
 & =\mathbb{P}\left(x_1=a,\,x_3=a,\,\left|2+\frac{x_2}{a}+\frac{a}{x_2}\right|=4\right)\\
 & =\alpha^2\,\mathbb{P}\Bigg(\left|2+\frac{x_2}{a}+\frac{a}{x_2}\right|=4\Bigg).
\end{align*}
Since $\alpha>0$, this implies that 
\begin{equation}\label{eq:forced}
\mathbb{P}\Bigg(\left|2+\frac{x_2}{a}+\frac{a}{x_2}\right|=4\Bigg)=1.
\end{equation}
The three solutions of 
\[
\left|2+\frac{x_2}{a}+\frac{a}{x_2}\right|=4
\]
can be found with straightforward calculations, hence \eqref{eq:forced} forces the $x_j$ to be discrete random variables taking values in the set
\begin{equation}\label{eq:atoms}
\left\{a,\,\left(-3-2\sqrt{2}\right)a,\,\left(-3+2\sqrt{2}\right)a\right\}.
\end{equation}
It remains to show that of all the probability distributions which are supported on subsets of \eqref{eq:atoms}, the only ones such that $\sigma^2=0$ are of the forms $\mathrm{i.}$--$\mathrm{iii.}$

The set \eqref{eq:atoms} has seven nonempty subsets, hence probability distributions created solely from atoms drawn from \eqref{eq:atoms} can have seven different supports. Any distribution whose support is one of the three singleton subsets is of the form $\mathrm{i.}$ while any distribution whose support is one of the two doubleton subsets containing $a$ is of the form $\mathrm{ii.}$ or $\mathrm{iii.}$ The remaining two subsets both contain the points $(-3-2\sqrt{2})a$ and $(-3+2\sqrt{2})a$. Hence the proof is complete if we can show that a distribution which assigns positive probability to each of these two points must have $\sigma^2\neq 0$, or equivalently, show that $\sigma^2=0$ implies that at most one of those two points is assigned positive probability.

Towards this end, define $a_-$ and $a_+$ by
\[
a_\pm=\left(-3\pm 2\sqrt{2}\right)a
\]
and define $\alpha_+$ and $\alpha_-$ by
\[
\alpha_\pm=\mathbb{P}\left(x_j=a_\pm\right).
\]
Then assuming $\sigma^2=0$, we can use independence and \eqref{eq:Jan2} to write
\begin{align*}
\alpha_+^2\,\alpha_- &=\mathbb{P}\left(x_1=x_3=a_+,\,x_2=a_-\right)\\
&=\mathbb{P}\left(x_1=x_3=a_+,\,x_2=a_-,\,\left|\left(1+\frac{x_2}{x_1}\right)\left(1+\frac{x_3}{x_2}\right)\right|=4\,e^{g(x_3)-g(x_1)}\right)\\
&=\mathbb{P}\left(x_1=x_3=a_+,\,x_2=a_-,\,\left|\left(18+12\sqrt{2} \right) \left(18-12\sqrt{2} \right)\right|=4\right)\\
&=\mathbb{P}\left(x_1=x_3=a_+,\,x_2=a_-,\,36=4\right)\\
&=0.
\end{align*}
Hence at least one of $\alpha_+=0$ or $\alpha_-=0$ holds. It follows that the only distributions such that $\sigma^2=0$ are of the forms $\mathrm{i.}$--$\mathrm{iii.}$
\end{proof}

\section{Examples}\label{sec:examples}

In this section we compute the Lyapunov exponent and variance in the CLT for the product of random matrices of the form \eqref{eq:1} for some specific examples using the formulas in Theorems \ref{thm:lyapunov_new} and \ref{thm:Main}. Example \ref{ex:binary} is taken directly from Lemma \ref{lem:binary}. For the other examples, the calculations were carried out by first computing the joint distribution of $\frac{x_2}{x_1}$ and $\frac{x_3}{x_2}$. The computations were subsequently checked by both symbolic and numerical integration using Mathematica and further corroborated by Monte Carlo simulations. We leave the details up to the reader. As remarked upon earlier, these examples also serve to demonstrate how the Lyapunov exponent and variance are invariant under a scale transformation of the $x_j$.

\begin{example}\label{ex:binary}
\emph{Binary distribution}\\
Suppose the $x_j$ are binary random variables taking the two values $a,b\neq 0$ with $a\neq -b$ and $\mathbb{P}\left(x_j=a\right)=p=1-\mathbb{P}\left(x_j=b\right)$. Then the Lyapunov exponent is given by 
\[
\lambda = \log 2+p(1-p)\log\left(\frac{(a+b)^2}{4|ab|}\right)
\]
and the variance in the CLT is 
\[
\sigma^2=p(1-p)\big(1-3p(1-p)\big)\Bigg(\log\left(\frac{(a+b)^2}{4|ab|}\right)\Bigg)^2.
\]
\end{example}

\begin{example}\label{ex:uniform}
\emph{Uniform distribution}\\
Suppose the $x_j$ are uniformly distributed over the interval $[-a,b]$ where we require $-a\leq 0<b$. Then the Lyapunov exponent is given by
\[
\lambda=
\begin{cases}
2\log 2-\frac{1}{2}\approx 0.8863&\text{if }a=0\\
\log 2-\frac{1}{2}\approx 0.1931&\text{if }a=b\\
2\frac{a^2+b^2}{(a+b)^2}\log 2-\frac{1}{2}+\frac{a-b}{(a+b)^2}\Big(b\log\left|1-\frac{a}{b}\right|-a\log\left|1-\frac{b}{a}\right|\Big)&\text{otherwise.}
\end{cases}
\]
In the particular cases of $a=0$ and $a=b$, we can also exactly calculate the variance in the CLT as 
\[
\sigma^2=
\begin{cases}
\frac{1}{36}\left(4\pi^2+15\right)-\frac{2}{3}\log 2\left(7\log 2-2\right)\approx 0.1954&\text{if }a=0\\
\frac{1}{36}\left(5\pi^2+15\right)\approx 1.7874&\text{if }a=b.
\end{cases}
\]
\end{example}

\begin{example}
\emph{Exponential distribution}\\
Suppose the $x_j$ have a nondegenerate exponential distribution. Then the Lyapunov exponent is given by
\[
\lambda=1
\]
and the variance in the CLT is 
\[
\sigma^2=\frac{1}{3}\left(\pi^2-9\right)\approx 0.2899.
\]
\end{example}

\begin{example}
\emph{Laplace distribution}\\
Suppose the $x_j$ have a nondegenerate Laplace distribution (symmetric bilateral exponential distribution) with zero mean. Then the Lyapunov exponent is given by
\[
\lambda=\frac{1}{2}
\]
and the variance in the CLT is 
\[
\sigma^2=\frac{1}{36}\left(8\pi^2-27\right)\approx 1.4432.
\]
\end{example}

\begin{acknowledgments}
The authors would like to thank Alexander Teplyaev and Maria Gordina for helpful discussions. We would also like to thank two anonymous referees for their careful review of the manuscript and whose comments and suggestions led to an improved paper. 
\end{acknowledgments}

\newcommand{\etalchar}[1]{$^{#1}$}

\end{document}